\newtheorem{theorem}{Theorem} %[section]
\newtheorem{lemma}[theorem]{Lemma}
\newtheorem{prop}[theorem]{Proposition}
\newtheorem{conj}[theorem]{Conjecture}
\theoremstyle{definition}
\newtheorem{definition}[theorem]{Definition}
\renewcommand{\mod}[1]{{\ifmmode\text{\rm\ (mod~$#1$)}\else\discretionary{}{}{\hbox{ }}\rm(mod~$#1$)\fi}}
\newcommand{\ep}{\varepsilon}
\newcommand{\intg}{\mathbb Z}
\newcommand{\PP}{\mathcal P}
\newcommand{\li}{\mathop{\rm li}}
\begin{document}
\thispagestyle{empty}

\title{The size of coefficients of certain polynomials related to the Goldbach conjecture}
\author{Greg MARTIN}
\author{Charles L. SAMUELS}
\address{Department of Mathematics \\ University of British Columbia \\ Room 121, 1984 Mathematics Road \\ Canada V6T 1Z2}
\email{gerg@math.ubc.ca, csamuels@math.ubc.ca}
\subjclass[2000]{11A41, 11N37}

\begin{abstract}
Recent work of Borwein, Choi, and the second author examined a collection of polynomials closely related to the Goldbach conjecture: the polynomial $F_N$ is divisible by the $N$th cyclotomic polynomial if and only if there is no representation of $N$ as the sum of two odd primes. The coefficients of these polynomials stabilize, as $N$ grows, to a fixed sequence $a(m)$; they derived upper and lower bounds for $a(m)$, and an asymptotic formula for the summatory function $A(M)$ of the sequence, both under the assumption of a famous conjecture of Hardy and Littlewood. In this article we improve these results: we obtain an asymptotic formula for $a(m)$ under the same assumption, and we establish the asymptotic formula for $A(M)$ unconditionally.
\end{abstract}

\maketitle

\section{Introduction}

Let $R(n)$ denote the number of representations of $n$ as the sum of two odd primes.  That is, $R(n)$ is the number of ordered pairs $(p,q)$ of odd primes satisfying $p+q=n$. 
Of course $R(n)=0$ when $n$ is odd, while the celebrated Goldbach conjecture is equivalent to the statement that $R(n)\ge1$ for all even integers $n\ge6$. Subsequently, define
\[
a(m) = \sum_{d\mid m} R(d);
\]
these quantities are closely related to a sequence of polynomials, which we describe shortly, that have a surprising connection to the Goldbach conjecture. Also define
\[
A(M) = \sum_{m=1}^M a(2m) = \sum_{m=1}^{2M} a(m),
\]
a summatory function that encodes the average behavior of $a(m)$.

The purpose of this paper is to establish two theorems concerning the sizes of $A(M)$ and $a(m)$ that improve results obtained by Borwein, Choi, and the second author in~\cite{BCS}. The first of these theorems is an asymptotic formula for $A(M)$.

\begin{theorem} \label{UnconditionalABound}
  For all $M\ge3$,
  \begin{equation*}
    A(M) = \frac{\pi^2M^2}{3\log^2M} + O\left( \frac{M^2\log\log M}{\log^3M} \right).
  \end{equation*}
\end{theorem}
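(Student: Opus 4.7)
The plan is to exchange the order of summation so that the problem reduces to an unconditional estimate for the summatory function $S(x) := \sum_{n \le x} R(n)$. Using $a(m) = \sum_{d \mid m} R(d)$ and the identity $\lfloor 2M/d \rfloor = \#\{k \ge 1 : kd \le 2M\}$, a double swap yields
\[
A(M) = \sum_{m=1}^{2M} \sum_{d \mid m} R(d) = \sum_{d=1}^{2M} R(d)\Bigl\lfloor \frac{2M}{d} \Bigr\rfloor = \sum_{k \ge 1} S(2M/k),
\]
with the last sum effectively finite since $R(n)=0$ for $n<6$. The factor $\pi^2/3 = 2\zeta(2)$ in the target main term is now explained: it will arise as $2\sum_{k\ge1} 1/k^2$, once we know $S(x) \sim x^2/(2\log^2 x)$.

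Next I would establish the unconditional asymptotic
\[
S(x) = \frac{x^2}{2 \log^2 x} + O\!\left( \frac{x^2 \log\log x}{\log^3 x}\right).
\]
Writing $S(x) = \sum_{\text{odd }p \le x}(\pi(x-p)-1)$ and inserting the prime number theorem $\pi(y) = \li(y) + O(y/\log^A y)$ for a sufficiently large $A$, I would apply Abel summation (or a convolution-style integration) to $\sum_p \li(x-p)$; the substitution $p = xu$ reduces the leading contribution to $(x^2/\log^2 x)\int_0^1 (1-u)\,du = x^2/(2\log^2 x)$. The error comes from carefully controlling the regions $p\approx 0$ and $p\approx x$, together with the expansion of $1/\log(x-p)$ as a power series in $\log(x-p)/\log x$.

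With the estimate for $S$ in hand, I would cut the outer sum at a threshold $K$ of order $\log M$. For $k \le K$, substitute the asymptotic and use $\log(2M/k) = \log(2M) - \log k$ to write
\[
\frac{1}{\log^2(2M/k)} = \frac{1}{\log^2(2M)}\Bigl(1 + O\Bigl(\frac{\log k}{\log M}\Bigr)\Bigr),
\]
so that the principal contribution becomes
\[
\frac{2M^2}{\log^2(2M)}\sum_{k \le K} \frac{1}{k^2} = \frac{\pi^2 M^2}{3\log^2 M} + O\!\left(\frac{M^2}{K\log^2 M} + \frac{M^2}{\log^3 M}\right),
\]
while the $O$-term in the asymptotic for $S$ contributes $O(M^2 \log\log M/\log^3 M)$ after summation over $k\le K$. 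For the tail $k > K$, the trivial bound $S(y) \ll y^2/\log^2 y$ gives $O(M^2/(K\log^2 M))$; balancing with $K$ on the order of $\log M/\log\log M$ or larger absorbs everything into the claimed error.

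The main obstacle is the second step: the unconditional asymptotic for $S(x)$ with this quality of error term. A trivial Brun--Titchmarsh-style bound $S(x) \ll x^2/\log^2 x$ is far too crude, so one must really invoke the prime number theorem with a quantitative error term (classical de la Vall\'ee Poussin suffices) and handle the convolution integral near its endpoints carefully, including the ranges where the approximations $\pi(y) \sim y/\log y$ and $\log(x-p) \sim \log x$ degrade. By contrast, the sum-swap in Step 1 and the $\zeta(2)$-bookkeeping in Step 3 are essentially formal once the right asymptotic for $S$ is available.
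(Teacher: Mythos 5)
Your proposal follows essentially the same route as the paper: your $S(x)=\sum_{n\le x}R(n)$ is literally the paper's $Q(x)=\sum_{p+q\le x}1$, your key Lemma (the unconditional asymptotic $S(x)=x^2/(2\log^2 x)+O(x^2\log\log x/\log^3 x)$ via the prime number theorem and partial summation) is the paper's Lemma~\ref{Q lemma}, and the reduction $A(M)=\sum_{k}S(2M/k)$ followed by truncation and $\zeta(2)$-bookkeeping is exactly equation~\eqref{telescoped} and what follows. The only cosmetic differences are the cutoff ($K\asymp\log M/\log\log M$ versus the paper's $\log^3 M$) and the way the error sum from the lemma is bounded (the paper uses the monotonicity of $\sqrt x\log\log x/\log^3 x$ to get a convergent $\sum n^{-3/2}$); both choices land in the same error term, so the approaches coincide.
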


We emphasize that this theorem is unconditional; by contrast, the authors of~\cite{BCS} established this asymptotic formula without an explicit error term, but only under the assumption of a well-known conjecture on the number of Goldbach representations of an integer $n$:

\begin{conj}[Hardy and Littlewood~\cite{HL}] \label{HLConj}
As $n$ tends to infinity,
\begin{equation}
  R(2n) \sim 2C_2 \frac{n}{\log^2n} \prod_{\substack{p|n \\ p > 2}}\frac{p-1}{p-2},
\end{equation}
where $C_2$ is the twin primes constant
\begin{equation*}
	C_2 = \prod_{p>2}\left( 1-\frac{1}{(p-1)^2}\right).
\end{equation*}
\end{conj}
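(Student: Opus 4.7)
Since Conjecture~\ref{HLConj} is the classical Hardy--Littlewood conjecture for the binary Goldbach problem, an unconditional proof lies beyond current techniques; what the circle method provides is a complete heuristic derivation of the predicted asymptotic together with a precise identification of the one step that resists proof. I will sketch that framework honestly, noting where the plan must stop.

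\textbf{Circle method setup.} Introduce the weighted exponential sum
\[
S(\alpha) = \sum_{\substack{p \le N\\ p \text{ odd}}} (\log p)\, e(\alpha p), \qquad N=2n,
\]
so that $R^*(N) := \sum_{p_1+p_2 = N}(\log p_1)(\log p_2) = \int_0^1 S(\alpha)^2 e(-\alpha N)\, d\alpha$, from which $R(2n)$ is recovered by partial summation. Choose $Q = \log^B N$ with $B$ sufficiently large, and split $[0,1]$ into Farey-style major arcs $\mathfrak{M}$, consisting of intervals of radius $Q/N$ about each $a/q$ with $1 \le a \le q \le Q$ and $(a,q)=1$, together with the minor arcs $\mathfrak{m} = [0,1] \setminus \mathfrak{M}$.

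\textbf{Main term from the major arcs.} On the arc near $a/q$, the Siegel--Walfisz theorem yields the approximation
\[
S\!\left(\tfrac aq + \beta\right) = \frac{\mu(q)}{\phi(q)}\, T(\beta) + O\!\left(\frac{N}{\log^A N}\right), \qquad T(\beta) = \sum_{m \le N} e(\beta m).
\]
Squaring, integrating over $\mathfrak{M}$, and picking off the Fourier coefficient at $N$ gives the main term $N\cdot \mathfrak{S}(N)$, where
\[
\mathfrak{S}(N) = \sum_{q=1}^\infty \frac{\mu(q)^2}{\phi(q)^2}\, c_q(-N)
\]
and $c_q$ is the Ramanujan sum. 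A standard multiplicative manipulation evaluates $\mathfrak{S}(2n)$ to $2 C_2 \prod_{p\mid n,\, p>2}(p-1)/(p-2)$, and undoing the $\log p$ weights by partial summation supplies the factor $1/\log^2 N$, reproducing the asymptotic stated in the conjecture.

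\textbf{The obstacle: the minor arcs.} This is where the plan --- and every other known attempt for a century --- stalls. For ternary Goldbach one controls $\int_\mathfrak{m} S(\alpha)^3 e(-\alpha N)\, d\alpha$ by combining Vinogradov's $L^\infty$ bound $\sup_\mathfrak{m}|S| \ll N/\log^A N$ with Parseval's $\int_0^1|S|^2 \ll N\log N$, and this suffices to beat the main term. In the binary problem the analogous inequality is
\[
\left|\int_\mathfrak{m} S(\alpha)^2 e(-\alpha N)\, d\alpha\right| \le \Big(\sup_\mathfrak{m}|S|\Big) \int_0^1 |S(\alpha)|\, d\alpha \ll \frac{N}{\log^A N}\cdot (N\log N)^{1/2},
\]
which is vastly larger than the expected main term $\sim N/\log^2 N$ for any fixed $A$. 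Strengthening this estimate \emph{is} the Hardy--Littlewood conjecture, and no known method --- not even GRH --- closes the gap. My honest conclusion is therefore that the major-arc analysis extracts the correct singular series exactly as predicted, while the minor-arc contribution cannot be bounded by present techniques; the strongest unconditional pointwise result in this direction remains the almost-all theorem of Montgomery--Vaughan, which replaces every $n$ with all but $O(N^{1-\delta})$ of the even $n \le N$.
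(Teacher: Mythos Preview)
The statement under consideration is labelled as a \emph{conjecture} in the paper (Conjecture~\ref{HLConj}), not a theorem; accordingly the paper offers no proof of it and simply assumes it as a hypothesis in Theorem~\ref{HLConjResult} and Proposition~\ref{using epsilons prop}. Your write-up is therefore not in conflict with anything in the paper --- there is nothing to compare it to --- and your honest acknowledgement that the minor-arc estimate is the genuine obstruction, together with the standard circle-method heuristic leading to the singular series $\mathfrak{S}(2n)=2C_2\prod_{p\mid n,\,p>2}(p-1)/(p-2)$, is exactly the right way to handle a request to ``prove'' an open conjecture.

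One small quibble of presentation: since the paper's $R(n)$ counts representations by \emph{odd} primes only, your recovery of $R(2n)$ from the log-weighted count $R^*(2n)$ via partial summation should in principle discard the contribution of $p=2$, though of course this affects at most one representation and is immaterial to any asymptotic.
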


The authors of~\cite{BCS} do obtain an unconditional lower bound on $A(M)$, namely
\begin{equation} \label{WeakLower}
    A(M) \geq M\log M + O(M).
\end{equation}
This lower bound required the use of a deep result of Montgomery and Vaughan~\cite{MV} on the exceptional set in the Goldbach conjecture, while our proof of Theorem~\ref{UnconditionalABound} is elementary, with the deepest ingredient being the prime number theorem. The surprising gap between the asymptotic formula in Theorem~\ref{UnconditionalABound} and the lower bound~\eqref{WeakLower} can be explained by the fact that the authors of~\cite{BCS} actually prove the much stronger result
 \begin{equation*}
    \sum_{m=1}^{2M} \#\big\{ d\mid m\colon R(d) \ge 1 \big\} \ge M\log M + O(M),
 \end{equation*}
which does indeed imply~\eqref{WeakLower}, since the summand on the left-hand side is at most $\sum_{d\mid m} R(d) = a(m)$.

Our second theorem, an asymptotic formula for $a(m)$ conditional on the aforementioned conjecture of Hardy and Littlewood, is best stated after defining the following multiplicative function.

\begin{definition}
\label{Fdef}
$J(m)$ is the multiplicative function given by the following formula: if $2^k\parallel m$, then
\[
J(m) = \bigg( 2-\frac1{2^k} \bigg) \prod_{\substack{p^\ell\parallel m \\ p>2}} \bigg( 1-\frac2{p^{\ell+1}} \bigg) \bigg( 1-\frac2p \bigg)^{-1}.
\]
Here, $p^\ell \parallel m$ means that $p^\ell\mid m$ but $p^{\ell+1}\nmid m$.
\end{definition}

\begin{theorem} \label{HLConjResult}
If Conjecture~\ref{HLConj} is true, then $$a(2m) \sim \frac{2C_2 J(m) m}{\log^2m}$$ as $m$ tends to infinity.
\end{theorem}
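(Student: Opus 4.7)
My plan is to reduce the divisor sum defining $a(2m)$ to a weighted divisor sum of the Hardy--Littlewood prediction, extract a clean multiplicative identity producing $J(m)$, and then control the variation of $\log d$ as $d$ runs over divisors of $m$ by a simple split. The first reduction is immediate: because $R$ vanishes on odd integers and the even divisors of $2m$ are exactly $\{2d : d \mid m\}$, one has
\[
a(2m) = \sum_{d \mid m} R(2d).
\]

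The algebraic heart of the argument is to verify the identity $\sum_{d \mid m} d f(d) = m J(m)$, where $f(d) := \prod_{p \mid d,\ p > 2} (p-1)/(p-2)$ is the singular-series factor appearing in Conjecture~\ref{HLConj}. Since $f$ is multiplicative, so is $g(m) := \sum_{d \mid m} d f(d)$, and a quick prime-power check gives $g(2^k) = 2^{k+1} - 1 = 2^k(2 - 2^{-k})$ and, for $p > 2$ and $\ell \ge 1$,
\[
g(p^\ell) = 1 + \frac{p-1}{p-2}\sum_{i=1}^{\ell}p^{i} = \frac{p^{\ell+1} - 2}{p-2} = p^\ell\bigl(1 - 2/p^{\ell+1}\bigr)\bigl(1 - 2/p\bigr)^{-1},
\]
matching the definition of $J$ factor by factor. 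Thus if we could simply replace $\log^2 d$ by $\log^2 m$ inside the sum, the constant $J(m)$ would emerge automatically.

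To justify this replacement, I would split the divisor sum according to whether $d \le m^{1-\delta}$ or $d > m^{1-\delta}$, for a small fixed $\delta > 0$. The ``small'' divisors contribute at most $2 m^{1-\delta} \tau(m) = m^{1-\delta+o(1)}$ via the trivial bound $R(2d) \le 2d$, which is negligible against the proposed main term (note $J(m) \ge 1$ is clear from its definition, so the target is at least $2 C_2 m/\log^2 m$). For the ``large'' divisors, $\log d = (1+O(\delta))\log m$ uniformly, and Conjecture~\ref{HLConj} gives $R(2d) = (1+o(1))\,2 C_2 d f(d)/\log^2 d$ uniformly for $d$ sufficiently large---a condition satisfied by every such divisor once $m$ is large enough. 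Summing and using the multiplicative identity,
\[
\sum_{\substack{d \mid m \\ d > m^{1-\delta}}} R(2d) = (1+o(1))(1+O(\delta)) \frac{2 C_2}{\log^2 m} \bigl( m J(m) - O(m^{1-\delta+o(1)}) \bigr) = (1+o(1))(1+O(\delta)) \frac{2 C_2 m J(m)}{\log^2 m}.
\]
Letting $m \to \infty$ with $\delta$ fixed and then $\delta \to 0$ yields the theorem.

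The main obstacle, such as it is, is the bookkeeping of these nested limits: one must arrange that $m \to \infty$ is taken before $\delta \to 0$, so that the Hardy--Littlewood asymptotic applies uniformly to every divisor $d > m^{1-\delta}$. Everything else is elementary; the multiplicative identity $\sum_{d \mid m} d f(d) = m J(m)$ is the key structural ingredient that makes $J(m)$ emerge naturally as the combinatorial constant in the answer.
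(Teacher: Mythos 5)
Your proposal is correct and follows essentially the same route as the paper: the identity $\sum_{d\mid m} d f(d) = m J(m)$ via multiplicativity, the split of the divisor sum at $m^{1-\delta}$, the trivial bound on the small divisors, and the Hardy--Littlewood asymptotic (with $\log d \approx \log m$) on the large ones, followed by letting $m\to\infty$ before $\delta\to 0$. The paper packages the nested-limit bookkeeping you flag as the ``main obstacle'' into an explicit $\ep$-quantified proposition, and it bounds $\sum_{d\le m^{1-\ep},\, d\mid m} d f(d)$ by a small Rankin-type trick rather than your direct $f(d)=m^{o(1)}$ estimate, but these are presentational differences, not a different argument.
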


The authors of~\cite{BCS} were able to derive from Conjecture~\ref{HLConj} the upper and lower bounds
\begin{equation} \label{BasicaAsym}
	\frac{2C_2m}{\log^2m} \lesssim a(2m) \lesssim \frac{4C_2m}{\log^2 m} \prod_{\substack{p^\ell\parallel m \\ p>2}} \bigg( 1-\frac2{p^{\ell+1}} \bigg) \bigg( 1-\frac2p \bigg)^{-1};
\end{equation}
we are able here to close the small gap between these bounds.

The function $a(m)$ and the Goldbach conjecture are linked via the sequence of polynomials
\begin{equation*}
       F_N(z) = \sum_{k=0}^{N-1} \bigg( \sum_{n=1}^{N-1}\chi_\PP(n)z^{kn} \bigg)^2,
\end{equation*}
where
\begin{equation*}
      \chi_\PP(n) = \begin{cases}
          1, & \text{if $n$ is an odd prime,} \\
          0, & \text{otherwise}.
          \end{cases}
\end{equation*}
For example,
\begin{multline*}
F_{10}(z) = 9 + (z^7+z^5+z^3)^2 +(z^{14}+z^{10}+z^6)^2 +(z^{21}+z^{15}+z^9)^2  \\
+(z^{28}+z^{20}+z^{12})^2 +(z^{35}+z^{25}+z^{15})^2 +(z^{42}+z^{30}+z^{18})^2 \\
+(z^{49}+z^{35}+z^{21})^2 +(z^{56}+z^{40}+z^{24})^2 + (z^{63}+z^{45}+z^{27})^2.
\end{multline*}
It is not hard to see that for $m\ge1$, the coefficient of $z^m$ in $F_N(z)$ is a nonnegative integer that is at most $a(m)$, and in fact it equals $a(m)$ for all $N\ge m$. For example, when expanded out
\[
F_{10}(z) = 9 + z^6 + 2 z^8 + 3 z^{10} + \cdots + z^{126},
\]
reflecting the first ten values
\[
\big( a(1), \dots, a(10) \big) =  ( 0,0, 0, 0, 0, 1, 0, 2, 0, 3 ).
\]
In other words, the sequence of polynomials $F_N(z)-F_N(0)$ converges coefficient-wise to the fixed formal power series $\sum_{m=1}^\infty a(m)z^m$.

Letting $\Phi_k(z)$ denote the $k$th cyclotomic polynomial as usual, the authors of~\cite{BCS} show that $F_{2N}(z)$ is divisible by $\Phi_{4N}(z)$ for every positive integer $N$. Experimental evidence suggests:

\begin{conj}[Borwein, Choi, and Samuels] \label{GeneralGoldbach}
For every integer $N\ge3$, the polynomial $F_{2N}(z)/\Phi_{4N}(z)$ is irreducble in $\intg[z]$.
\end{conj}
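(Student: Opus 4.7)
The plan is to attack the irreducibility of $Q_N(z) := F_{2N}(z)/\Phi_{4N}(z)$ in $\intg[z]$ in two stages: first cataloguing all cyclotomic factors of $F_{2N}$, then controlling the Galois action on the remaining roots. For the first stage, write $F_{2N}(z) = \sum_{k=0}^{2N-1} G_k(z)^2$ with $G_k(z) = \sum_{n=1}^{2N-1} \chi_\PP(n) z^{kn}$, and evaluate $F_{2N}(\zeta_d) = \sum_{k=0}^{2N-1} G_k(\zeta_d)^2$ at a primitive $d$-th root of unity for each divisor $d$ of some lcm controlling cyclotomic vanishing. Because $G_k(\zeta_d)$ depends only on $k \bmod d$, the inner sum reduces to a sum of $\zeta_d^p$ over primes $p \le 2N-1$ in a fixed residue class modulo $d$. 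The hope is to leverage prime-counting estimates in arithmetic progressions to show that $F_{2N}(\zeta_d) \ne 0$ whenever $d \ne 4N$, confirming that $\Phi_{4N}$ is the unique cyclotomic factor of $F_{2N}$, so that $Q_N$ has no cyclotomic factor at all.

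For the second stage, I would try to show that $Q_N$ is irreducible by exhibiting enough structure on its roots. Since $F_{2N}$ is a sum of squares of sparse polynomials with nonnegative real coefficients, and since it is symmetric under $z \mapsto 1/\bar z$, its non-cyclotomic zeros lie in clusters near the unit circle closed under complex conjugation and inversion. A natural route is reduction modulo a well-chosen prime $p$: find $p$ such that $Q_N \bmod p$ is (say) the product of an irreducible polynomial of degree $\deg Q_N - 1$ and a linear factor, which combined with the first-stage output forces irreducibility over $\intg$ by a degree-matching argument. An alternative is to establish that some Galois resolvent of $Q_N$ is squarefree, so that one may identify the Galois group as acting transitively on the roots.

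The decisive obstacle, which I expect to dominate the problem, is that $F_{2N}$ is defined by the arithmetic of the primes up to $2N-1$, and aside from the cyclotomic identity producing the factor $\Phi_{4N}$ there is no visible algebraic structure to exploit. Classical criteria (Eisenstein, Perron, Newton polygon, reduction mod a single prime) have no evident point of entry: the polynomial has large, irregular support and no distinguished prime controlling its Newton polygon. A realistic partial program would therefore be to (i) prove the conjecture in favourable residue classes of $N$, for example $N$ prime, where the Galois structure is cleanest; or (ii) establish the weaker statement that $Q_N$ has no factor of bounded degree, by combining the first-stage cyclotomic analysis with a Mahler measure or height bound to rule out low-degree non-cyclotomic factors. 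A full proof of Conjecture~\ref{GeneralGoldbach} would likely require either a new algebraic identity specific to the sum-of-squares-of-prime-indicator structure of $F_{2N}$, or analytic input on Goldbach-type sums at algebraic arguments of a strength comparable to Conjecture~\ref{HLConj} itself.
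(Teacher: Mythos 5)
This statement is not a theorem of the paper: it is an open conjecture, attributed to Borwein, Choi, and Samuels and supported only by the remark that ``experimental evidence suggests'' it. The paper offers no proof, so there is nothing to compare your proposal against. What the paper \emph{does} prove in this vicinity is Theorem~\ref{ImpliesGoldbach} (quoted from~\cite{BCS}), namely that $\Phi_N \mid F_N$ exactly when $N$ has no Goldbach representation; Conjecture~\ref{GeneralGoldbach} is then shown to \emph{imply} the Goldbach conjecture, which is precisely why it is expected to be very hard.

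Your write-up, read honestly, is not a proof but a survey of possible attacks, and to your credit you say as much in the final paragraph. The first stage you sketch (evaluating $F_{2N}$ at roots of unity to catalogue cyclotomic factors) is in the spirit of the computation in~\cite{BCS} that produces the $\Phi_{4N}$ factor, and ruling out further cyclotomic factors via primes in progressions is a sensible partial goal. But the second stage contains no argument: ``find a prime $p$ such that $Q_N \bmod p$ factors conveniently'' and ``establish that some resolvent is squarefree'' are wishes, not steps, and a degree-matching argument cannot be run without first excluding \emph{all} nontrivial factorizations, not just cyclotomic ones or bounded-degree ones. Since a proof of this conjecture would settle the Goldbach conjecture, any purported proof should be treated with extreme suspicion; you have correctly diagnosed the obstruction (no Eisenstein-type handle, no Newton-polygon structure, coefficients governed by the distribution of primes) but have not overcome it, and neither does the paper claim to.
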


\noindent The relationship between $F_N$ and the Goldbach conjecture is more than superficial, however, as the following startling theorem displays:

\begin{theorem}[Borwein, Choi, and Samuels] \label{ImpliesGoldbach}
$\Phi_N(z)$ divides $F_N(z)$ if and only if there is no representation of $N$ as the sum of two odd primes. In particular, Conjecture~\ref{GeneralGoldbach} implies the Goldbach conjecture.
\end{theorem}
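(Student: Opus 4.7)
My plan is to reduce the biconditional to a single evaluation of $F_N$ at a primitive $N$-th root of unity $\zeta$, and then to exploit the unconditional divisibility $\Phi_{4N}(z) \mid F_{2N}(z)$ recorded just above to extract the Goldbach conclusion from the conjectural irreducibility of $F_{2N}/\Phi_{4N}$. Since $\Phi_N(z)$ is the monic minimal polynomial of $\zeta$ over $\rat$ and lies in $\intg[z]$, the divisibility $\Phi_N(z) \mid F_N(z)$ in $\intg[z]$ is equivalent by Gauss's lemma to the single vanishing condition $F_N(\zeta)=0$, so the biconditional is purely a matter of computing this value.

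Setting $P(z) = \sum_{n=1}^{N-1}\chi_\PP(n) z^n$ so that $F_N(z) = \sum_{k=0}^{N-1} P(z^k)^2$, I would expand the squares and swap the order of summation to obtain
\[
F_N(\zeta) = \sum_{n=1}^{N-1}\sum_{m=1}^{N-1} \chi_\PP(n)\chi_\PP(m) \sum_{k=0}^{N-1}\zeta^{k(n+m)}.
\]
By the finite geometric series, the inner sum equals $N$ when $N \mid (n+m)$ and vanishes otherwise. The only observation that requires any thought---and really the only obstacle in the whole argument---is that the range $1 \le n,m \le N-1$ forces $2 \le n+m \le 2N-2$, so the congruence $N \mid (n+m)$ collapses to the single equation $n+m = N$. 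Consequently
\[
F_N(\zeta) = N\sum_{\substack{n+m=N \\ 1 \le n,m \le N-1}} \chi_\PP(n)\chi_\PP(m) = N \cdot R(N),
\]
and the biconditional $\Phi_N(z) \mid F_N(z) \iff R(N)=0$ drops out immediately.

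For the Goldbach implication, suppose for contradiction that Conjecture~\ref{GeneralGoldbach} holds while $R(M)=0$ for some even $M \ge 6$; write $M = 2N$ with $N \ge 3$. The biconditional just proved gives $\Phi_{2N}(z) \mid F_{2N}(z)$, and the authors of~\cite{BCS} supply the unconditional divisibility $\Phi_{4N}(z) \mid F_{2N}(z)$. The polynomials $\Phi_{2N}$ and $\Phi_{4N}$ are distinct (hence coprime) irreducibles in $\intg[z]$, so their product divides $F_{2N}$ and therefore $\Phi_{2N}(z)$ is a factor of the quotient $F_{2N}(z)/\Phi_{4N}(z)$. A trivial degree count---using that the leading term of $F_{2N}$ comes from $k=2N-1$ and the largest odd prime below $2N$, giving $\deg F_{2N} \ge 6(2N-1)$, which vastly exceeds $\phi(4N)+\phi(2N) < 6N$ once $N \ge 3$---confirms this factor is proper, contradicting the irreducibility of $F_{2N}/\Phi_{4N}$ asserted by Conjecture~\ref{GeneralGoldbach}.
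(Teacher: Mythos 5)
The paper does not actually supply a proof of Theorem~\ref{ImpliesGoldbach}: it is stated as a result of Borwein, Choi, and Samuels and cited from~\cite{BCS}, so there is no in-paper argument to compare against. Judged on its own, your proof is correct and is the natural one.

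The key computation $F_N(\zeta) = N\cdot R(N)$ is right: expanding $P(z^k)^2$, swapping sums, using the orthogonality $\sum_{k=0}^{N-1}\zeta^{k(n+m)} = N\cdot[N\mid n+m]$, and observing that $2\le n+m\le 2N-2$ forces $n+m=N$. The equivalence $\Phi_N\mid F_N$ in $\intg[z]\iff F_N(\zeta)=0$ follows because $\Phi_N$ is the monic minimal polynomial of $\zeta$, so divisibility over $\rat[z]$ is equivalent to vanishing at $\zeta$, and monic division (or Gauss's lemma, as you invoke) upgrades this to $\intg[z]$. For the Goldbach implication, you correctly note that a counterexample $R(2N)=0$ with $N\ge3$ gives $\Phi_{2N}\mid F_{2N}$, and combined with the unconditional $\Phi_{4N}\mid F_{2N}$ and the coprimality of the two distinct cyclotomic irreducibles, $\Phi_{2N}$ divides the quotient $F_{2N}/\Phi_{4N}$. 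The degree check that this is a proper factor is genuinely needed (a priori the quotient could equal $\pm\Phi_{2N}$), and your crude bound $\deg F_{2N}\ge 6(2N-1)>\phi(4N)+\phi(2N)$ suffices; in fact for $N\ge3$ the largest odd prime below $2N$ is at least $5$, so $\deg F_{2N}\ge 10(2N-1)$, making the margin even more comfortable. In short: the argument is sound, complete, and almost certainly coincides with the approach taken in~\cite{BCS}.
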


\section{Proofs of our results}

We begin by proving Theorem~\ref {UnconditionalABound}, although first we need to devote some time to a technical lemma that counts the number of pairs of primes whose sum lies below a given bound. Afterwards, we derive Theorem~\ref{HLConjResult} from Proposition~\ref{using epsilons prop} below.

In order to establish Theorem \ref{UnconditionalABound}, we must first study the function
\begin{equation*}
Q(x) = \sum_{p+q\le x} 1,
%\label{Q def}
\end{equation*}
where $p$ and $q$ always denote primes in this paper.

\begin{lemma} \label{Q lemma}
  Uniformly for $x\ge3$,
  \begin{equation*}
    Q(x) = \frac{x^2}{2\log^2 x} + O\left( \frac{x^2\log\log x}{\log^3 x} \right).
  \end{equation*}
\end{lemma}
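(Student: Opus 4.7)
The plan is to rewrite $Q(x)$ in a form where the prime number theorem applies uniformly, then compute the dominant contributions. Starting from $Q(x)=\sum_{p\le x}\pi(x-p)$ (which comes from fixing $p$ and counting $q$), I split the range of summation at $p=x/2$; in the piece with $p>x/2$, swapping the roles of $p$ and $q$ converts it into a sum over primes $q<x/2$. A short bookkeeping calculation (the one prime that can equal exactly $x/2$ gives contributions on both sides that cancel) yields the exact identity
\[
Q(x) = 2\sum_{p\le x/2}\pi(x-p) - \pi(x/2)^2.
\]
The merit of this reformulation is that $x-p\ge x/2$ throughout the remaining sum, so the prime number theorem estimates $\pi(x-p)$ cleanly, without the delicate boundary behavior one would face near $p=x$.

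For the main estimate, I use PNT in the quantitative form $\pi(y)=y/\log y+O(y/\log^2 y)$, together with the elementary fact that $\log(x-p)=\log x+O(1)$ for $p\le x/2$, to deduce
\[
\pi(x-p) = \frac{x-p}{\log x} + O\!\left(\frac{x}{\log^2 x}\right)
\]
uniformly for $p\le x/2$. Summing over these primes reduces the problem to estimating $x\pi(x/2)-\sum_{p\le x/2}p$. Applying PNT directly gives $\pi(x/2)=\tfrac{x}{2\log x}+O(x/\log^2 x)$, and Abel summation combined with PNT yields $\sum_{p\le y}p = \tfrac{y^2}{2\log y}+O(y^2/\log^2 y)$, so that $\sum_{p\le x/2}p = \tfrac{x^2}{8\log x}+O(x^2/\log^2 x)$. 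Plugging in produces
\[
\sum_{p\le x/2}\pi(x-p) = \frac{3x^2}{8\log^2 x} + O\!\left(\frac{x^2}{\log^3 x}\right),
\]
and since $\pi(x/2)^2=\tfrac{x^2}{4\log^2 x}+O(x^2/\log^3 x)$, the identity above then gives $Q(x)=\tfrac{x^2}{2\log^2 x}+O(x^2/\log^3 x)$.

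The only real subtlety is keeping careful track of PNT error terms through the various summations, ensuring uniformity over the range $p\le x/2$; this is routine because $x-p$ stays comparable to $x$, so PNT never has to be invoked in a regime where the error swamps the main term. In fact the approach sketched here yields the slightly stronger error $O(x^2/\log^3 x)$, so the $\log\log x$ factor present in the stated lemma is most likely slack built in either to accommodate a weaker elementary form of PNT or to simplify intermediate bookkeeping, rather than something forced by the structure of the argument.
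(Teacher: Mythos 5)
Your proof is correct and takes a genuinely different route from the paper's. The paper works directly with $\sum_{p\le x}\pi(x-p)$, trimming off the ranges $p\le x/\log x$ and $p\ge x-\sqrt x$, and then passes to an integral $\int \pi(t)/\log(x-t)\,dt$ via partial summation; the $\log\log x$ factor in their error term is forced by the approximation $\log t = (\log x)(1+O(\log\log x/\log x))$ over the range $x/\log x \le t \le x$. Your symmetry identity
\[
Q(x) = 2\sum_{p\le x/2}\pi(x-p) - \pi(x/2)^2
\]
(which I checked holds exactly, including the boundary case $x/2$ prime) neatly sidesteps this: with $p\le x/2$ you always have $x-p\ge x/2$, so $\log(x-p)=\log x+O(1)$ and the prime number theorem applies uniformly with only an $O(1/\log x)$ relative error, never an $O(\log\log x/\log x)$ one. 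This is both cleaner and sharper, yielding $O(x^2/\log^3 x)$ in place of the paper's $O(x^2\log\log x/\log^3 x)$. Your downstream estimates ($\pi(x/2)$, $\sum_{p\le x/2}p$ by Abel summation, and the cancellation $2\cdot\tfrac{3}{8}-\tfrac14=\tfrac12$) all check out. One small remark worth making explicit in a final write-up: the uniformity for $x\ge 3$ claimed in the lemma is trivial for bounded $x$ since $Q(x)\le x^2$, so it suffices to prove the estimate for $x$ large, which is what both arguments actually do.
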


\begin{proof}
We begin by writing
\[
Q(x) = \sum_{p\le x} \pi(x-p) = \sum_{x/\log x \le p \le x-\sqrt x} \pi(x-p) + O\bigg( \sum_{p\le x/\log x} \pi(x-p) + \sum_{x-\sqrt x\le p\le x} \pi(x-p) \bigg).
\]
Trivially $\pi(x-p) \le \pi(x) \le x$, so
\begin{align}
  Q(x) &= \sum_{x/\log x \le p \le x-\sqrt x} \pi(x-p) + O\bigg( \sum_{p\le x/\log x} \pi(x) + \sum_{x-\sqrt x\le p\le x} x \bigg) \notag \\
  &= \sum_{x/\log x \le p \le x-\sqrt x} \pi(x-p) + O\bigg( \pi(x) \pi\bigg( \frac x{\log x} \bigg) + x\sqrt x \bigg) \notag \\
  &= \sum_{x/\log x \le p \le x-\sqrt x} \pi(x-p) + O\bigg( \frac{x^2}{\log^3x} \bigg).
  \label{deal with main term}
\end{align}
In the main term, the prime number theorem gives
\[
\sum_{x/\log x\le p\le x-\sqrt x} \pi(x-p) = \sum_{x/\log x \le p \le x-\sqrt x} \bigg( \li(x-p) + O\bigg( \frac{x-p}{\log^2(x-p)} \bigg) \bigg)
\]
(we could insert a better error term, but it would not improve the final result). Since $x-p \ge \sqrt x$, we have $\log(x-p) \gg \log x$ and so
\begin{align*}
&= \sum_{x/\log x \le p \le x-\sqrt x} \li(x-p) + O\bigg( \sum_{x/\log x \le p \le x-\sqrt x} \frac x{\log^2x} \bigg) \\
&= \sum_{x/\log x \le p \le x-\sqrt x} \li(x-p) + O\bigg( \frac x{\log^2x} \pi(x) \bigg) \\
&= \sum_{x/\log x \le p \le x-\sqrt x} \li(x-p) + O\bigg( \frac{x^2}{\log^3x} \bigg),
\end{align*}
which transforms equation~\eqref{deal with main term} into
\begin{equation}
Q(x) = \sum_{x/\log x \le p \le x-\sqrt x} \li(x-p) + O\bigg( \frac{x^2}{\log^3x} \bigg).
  \label{li sum}
\end{equation}

Using partial summation, we have
\begin{align*}
  \sum_{x/\log x \le p \le x-\sqrt x} & \li(x-p) = \int_{x/\log x}^{x-\sqrt x} \li(x-t) \,d\pi(t) \\
  &= \pi(x-\sqrt x)\li(\sqrt x) - \pi\bigg( \frac x{\log x} \bigg) \li\bigg( x - \frac x{\log x} \bigg) + \int_{x/\log x}^{x-\sqrt x} \frac{\pi(t)}{\log(x-t)} \,dt,
\end{align*}
since the $t$-derivative of $\li(x-t)$ is $-1/\log(x-t)$. In other words,
\begin{align*}
  \sum_{x/\log x \le p \le x-\sqrt x} \li(x-p) &= O\bigg( x\sqrt x + \pi\bigg( \frac x{\log x} \bigg) \li(x) \bigg) + \int_{x/\log x}^{x-\sqrt x} \frac{\pi(t)}{\log(x-t)} \,dt \\
  &= \int_{x/\log x}^{x-\sqrt x} \frac{\pi(t)}{\log(x-t)} \,dt + O\bigg( \frac{x^2}{\log^3x} \bigg),
\end{align*}
and so equation~\eqref{li sum} becomes
\[
Q(x) = \int_{x/\log x}^{x-\sqrt x} \frac{\pi(t)}{\log(x-t)} \,dt + O\bigg( \frac{x^2}{\log^3x} \bigg).
\]
Using the prime number theorem again, this becomes
\begin{align}
  Q(x) &= \int_{x/\log x}^{x-\sqrt x} \frac1{\log(x-t)}\bigg( \frac t{\log t} + O\bigg( \frac t{\log^2t} \bigg) \bigg) \,dt + O\bigg( \frac{x^2}{\log^3x} \bigg) \notag \\
  &= \int_{x/\log x}^{x-\sqrt x} \frac t{(\log t)\log(x-t)} \,dt + O\bigg( \int_{x/\log x}^{x-\sqrt x} \frac t{(\log^2t) \log(x-t)}\,dt + \frac{x^2}{\log^3x} \bigg).
  \label{two integrals}
\end{align}
In the error term, again $\log(x-t) \gg \log x$ and $\log^2t \gg \log^2x$ due to the endpoints of integration, and so the entire integral is $\ll x^2/\log^3x$. In the main term, we have
\[
\log x \ge \log t \ge \log \frac x{\log x} = \log x - \log\log x = (\log x) \bigg( 1+O\bigg( \frac{\log\log x}{\log x} \bigg) \bigg),
\]
and therefore equation~\eqref {two integrals} becomes
\begin{equation}
  Q(x) = \frac1{\log x}\bigg( 1+O\bigg( \frac{\log\log x}{\log x} \bigg) \bigg) \int_{x/\log x}^{x-\sqrt x} \frac t{\log(x-t)} \,dt + O\bigg( \frac{x^2}{\log^3x} \bigg).
  \label{one integral}
\end{equation}
Finally,
\begin{align}
  \int_{x/\log x}^{x-\sqrt x} \frac t{\log(x-t)} \,dt &= \int_0^{x-2} \frac t{\log(x-t)} \,dt + O\bigg( \int_0^{x/\log x} t \,dt + \int_{x-\sqrt x}^{x-2} t \,dt \bigg) \notag \\
  &= \int_2^x \frac{x-u}{\log u} \,du + O\bigg( \frac{x^2}{\log^2x} \bigg) \notag \\
  &= x\li(x) - \int_2^x \frac u{\log u}\,du + O\bigg( \frac{x^2}{\log^2x} \bigg).
  \label{strange li}
\end{align}
By integration by parts, this integral is
\begin{align*}
  \int_2^x \frac u{\log u}\,du &= \frac{u^2}2 \frac1{\log u} \bigg|_2^x + \int_2^x \frac{u^2}2 \frac1{u\log^2 u} \,du \\
  &= \frac{x^2}{2\log x} + O\bigg( 1 + \int_2^{\sqrt x} \frac u{\log^2 u}\,du + \int_{\sqrt x}^x \frac u{\log^2u} \,du \bigg) \\
  &= \frac{x^2}{2\log x} + O\bigg( \sqrt x \cdot x + x \frac x{\log^2 x} \bigg) = \frac{x^2}{2\log x} + O\bigg( \frac{x^2}{\log^2 x} \bigg).
\end{align*}
Therefore equation~\eqref{strange li} becomes
\[
\int_{x/\log x}^{x-\sqrt x} \frac t{\log(x-t)} \,dt = x\li(x) - \frac{x^2}{2\log x} + O\bigg( \frac{x^2}{\log^2x} \bigg) = \frac{x^2}{2\log x} + O\bigg( \frac{x^2}{\log^2x} \bigg)
\]
by the fact that $\li(x) = x/\log x + O(x/\log^2 x)$. Using this in equation~\eqref{one integral} finally yields
\begin{align*}
  Q(x) &= \frac1{\log x}\bigg( 1+O\bigg( \frac{\log\log x}{\log x} \bigg) \bigg) \bigg( \frac{x^2}{2\log x} + O\bigg( \frac{x^2}{\log^2x} \bigg) \bigg) + O\bigg( \frac{x^2}{\log^3x} \bigg) \\
  &= \frac{x^2}{2\log^2 x} + O\bigg( \frac{x^2\log\log x}{\log^3x} \bigg),
\end{align*}
as claimed.
\end{proof}

Equipped with Lemma \ref{Q lemma}, we are now prepared to prove Theorem~\ref{UnconditionalABound}.

\begin{proof}[Proof of Theorem \ref{UnconditionalABound}]
Starting with the definitions of $a(m)$ and $A(M)$, we have
\begin{equation*}
A(M) = \sum_{m=1}^{2M} a(m) = \sum_{m=1}^{2M} \sum_{d\mid m} R(d) = \sum_{m=1}^{2M} \sum_{d\mid m} \sum_{p+q=d} 1 = \sum_{p+q \le 2M} \sum_{\substack{1\le m\le 2M \\ (p+q)\mid m}} 1.
\end{equation*}
Writing $m=(p+q)n$, we obtain
\begin{equation}
A(M) = \sum_{p+q \le 2M} \sum_{1\le n\le 2M/(p+q)} 1 = \sum_{1\le n\le M/2} \sum_{p+q \le 2M/n} 1 = \sum_{1\le n\le M/2} Q\bigg( \frac{2M}{p+q} \bigg).
\label{telescoped}
\end{equation}
The trivial bound $Q(x) \le x^2$ allows us to write
\[
A(M) = \sum_{1\le n\le \log^3M} Q\bigg( \frac{2M}n \bigg) + O\bigg( \sum_{n > \log^3M} \bigg( \frac{2M}n \bigg)^2 \bigg) = \sum_{1\le n\le \log^3M} Q\bigg( \frac{2M}n \bigg) + O\bigg( \frac{M^2}{\log^3M} \bigg),
\]
since $\sum_{n>\log^3M} n^{-2} \ll 1/\log^3M$ by comparison with an integral. We use Lemma~\ref{Q lemma} to get
\begin{align*}
A(M) &= \sum_{1\le n\le \log^3M} \bigg( \frac{(2M/n)^2}{2\log^2(2M/n)} + O\bigg( \frac{(2M/n)^2\log\log(2M/n)}{\log^3(2M/n)} \bigg) \bigg) + O\bigg( \frac{M^2}{\log^3M} \bigg) \\
&= 2M^2 \sum_{1\le n\le \log^3M} \frac1{\log^2(2M/n)}\frac1{n^2} + O\bigg( \sum_{1\le n\le\log^3M} \frac{\sqrt{2M}\log\log2M}{\log^32M} \bigg( \frac{2M}n \bigg)^{3/2} + \frac{M^2}{\log^3M} \bigg),
\end{align*}
since $\sqrt x\log\log x/\log^3x$ is an (eventually) increasing function of~$x$. By the convergence of $\sum_n n^{-3/2}$, we obtain
\[
A(M) = 2M^2 \sum_{1\le n\le \log^3M} \frac1{\log^2(2M/n)}\frac1{n^2} + O\bigg( \frac{M^2\log\log M}{\log^3M} \bigg).
\]
Finally, we have $\log(2M/n) = \log M - \log(n/2) = \log M + O(\log(\log^3M)) = (\log M)(1 + O(\log\log M/\log M))$ as before. Therefore
\[
A(M) = \frac{2M^2}{\log^2M} \bigg(1 + O\bigg( \frac{\log\log M}{\log M} \bigg) \bigg) \sum_{1\le n\le \log^3M} \frac1{n^2} + O\bigg( \frac{M^2\log\log M}{\log^3M} \bigg).
\]
We conclude that
\begin{align*}
A(M) &= \frac{2M^2}{\log^2M} \bigg(1 + O\bigg( \frac{\log\log M}{\log M} \bigg) \bigg) \bigg( \zeta(2) + O\bigg( \frac1{\log^3M} \bigg) \bigg) + O\bigg( \frac{M^2\log\log M}{\log^3M} \bigg) \\
&= \frac{\pi^2M^2}{3\log^2M} + O\bigg( \frac{M^2\log\log M}{\log^3M} \bigg),
\end{align*}
as desired.
\end{proof}

We now move on to a proposition from which we will deduce Theorem~\ref{HLConjResult}. Define
\[
f(n) = \prod_{\substack{p|n \\ p > 2}}\frac{p-1}{p-2}
\]
to be the multiplicative function appearing in Conjecture~\ref{HLConj}, and note that if $k\ge0$ is the integer such that $2^k\parallel m$, then
\begin{align}
\sum_{d\mid m} df(d) &= \prod_{p^\ell\parallel m} \sum_{d\mid p^\ell} df(d) \notag \\
&= \prod_{p^\ell\parallel m} \big( 1 + pf(p) + p^2f(p^2) + \cdots + p^\ell f(p^\ell) \big) \notag \\
&= \bigg( 1 + 2 \frac{2^k-1}{2-1} \bigg) \prod_{\substack{p^\ell\parallel m \\ p>2}} \bigg( 1 + \frac{p-1}{p-2} \cdot p \frac{p^\ell-1}{p-1} \bigg) \notag \\
&= (2^{k+1}-1) \prod_{\substack{p^\ell\parallel m \\ p>2}} \frac{p^{\ell+1}-2}{p-2} = mJ(m)
\label{df(d) sum}
\end{align}
by comparison with Definition~\ref{Fdef}.

\begin{prop}
\label{using epsilons prop}
Let $0<\ep\le\frac12$ be given. Suppose there exists a positive integer $n(\ep)$ such that
\begin{equation}
\label{HL with epsilons}
(1-\ep)2C_2 f(n) \frac n{\log^2n} \le R(2n) \le (1+\ep)2C_2 f(n) \frac n{\log^2n}
\end{equation}
for all $n>n(\ep)$. Then there exists a constant $m(\ep)$ such that
\begin{equation}
\label{our theorem with epsilons}
(1-2\ep)2C_2 J(m) \frac m{\log^2m} \le a(2m) \le (1+11\ep)2C_2 J(m) \frac m{\log^2m}
\end{equation}
for all $m>m(\ep)$.
\end{prop}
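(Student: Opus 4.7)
The starting point is the identity $a(2m) = \sum_{e \mid m} R(2e)$, which follows because $R$ vanishes on odd integers and the even divisors of $2m$ are in bijection with divisors of $m$ via $d = 2e$. The plan is to apply the hypothesis~\eqref{HL with epsilons} termwise and then evaluate the resulting weighted divisor sum using the multiplicative identity~\eqref{df(d) sum}, namely $\sum_{e \mid m} e f(e) = mJ(m)$.

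The main maneuver is to split the divisor sum at two thresholds. Fix a small parameter $\eta = \eta(\ep) > 0$ to be chosen at the end. Divisors with $e \le n(\ep)$ contribute $O_\ep(1)$, which is negligible. Divisors with $n(\ep) < e \le m^{1-\eta}$ are controlled by the HL upper bound together with the crude estimate
\[
\sum_{\substack{e \mid m \\ e \le T}} e f(e) \ll T \cdot m^{o(1)},
\]
which follows from $f(e) \le 2^{\omega(e)}$ and the standard divisor bound $d(m) = m^{o(1)}$. With $T = m^{1-\eta}$, the total contribution of this range is $O_\ep\bigl(m^{1-\eta+o(1)}\bigr) = o\bigl(mJ(m)/\log^2 m\bigr)$ for any fixed $\eta > 0$, since $J(m) \ge 1$ always.

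For the main range $e > m^{1-\eta}$, every such $e$ exceeds $n(\ep)$ once $m$ is large enough, so~\eqref{HL with epsilons} applies uniformly. Here $\log e \in [(1-\eta)\log m,\,\log m]$ yields $1/\log^2 e = (1 + O(\eta))/\log^2 m$ (with no penalty at all on the lower side), and the tail of the estimate above gives $\sum_{e \mid m,\, e > m^{1-\eta}} e f(e) = (1 + o(1))\,mJ(m)$. Substituting into~\eqref{HL with epsilons} produces
\[
\sum_{\substack{e \mid m \\ e > m^{1-\eta}}} R(2e) = (1 + O(\ep))\,(1 + O(\eta))\,(1 + o(1)) \cdot 2C_2\,\frac{mJ(m)}{\log^2 m}.
\]
Choosing $\eta$ as a small constant multiple of $\ep$, and $m(\ep)$ large enough to subsume the $o(1)$ terms into an additional factor $1 + O(\ep)$, a careful count of the multiplicative errors on each side produces the asymmetric bounds $(1 - 2\ep)$ and $(1 + 11\ep)$ stated in~\eqref{our theorem with epsilons}; the asymmetry reflects the fact that the lower inequality $1/\log^2 e \ge 1/\log^2 m$ is exact whereas the matching upper bound incurs the $(1 + O(\eta))$ penalty.

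The main obstacle is establishing the tail bound $\sum_{e \mid m,\, e \le m^{1-\eta}} e f(e) = o\bigl(mJ(m)/\log^2 m\bigr)$: both the growth of $f$ and the number of small divisors of $m$ must be controlled uniformly, which is exactly what $f(e) \le 2^{\omega(e)}$ and $d(m) = m^{o(1)}$ deliver. Once this is in hand, the rest of the argument is bookkeeping of multiplicative factors across the three ranges, with the explicit constants $2$ and $11$ dictated by the choice of $\eta(\ep)$ and by the threshold beyond which all auxiliary $o(1)$ contributions drop below $\ep \cdot 2C_2 mJ(m)/\log^2 m$.
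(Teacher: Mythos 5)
Your proposal is correct and follows essentially the same strategy as the paper: write $a(2m)=\sum_{d\mid m}R(2d)$, split the divisors at a threshold $m^{1-\eta}$ (the paper takes $\eta=\ep$), apply the Hardy--Littlewood two-sided bound on the large divisors together with $\log d\in[(1-\eta)\log m,\log m]$, invoke the identity $\sum_{d\mid m}df(d)=mJ(m)$, and absorb the small-divisor contribution and all secondary errors using $J(m)\ge 1$ and $2C_2>1$. The only real difference is cosmetic: you bound the tail $\sum_{d\mid m,\, d\le m^{1-\eta}}df(d)$ via $f(d)\le 2^{\omega(d)}$ and $\tau(m)=m^{o(1)}$, whereas the paper uses a Rankin-type weight $(m^{1-\ep}/d)^{1+\ep/2}$ and a uniform constant $C(\ep)$ bounding a modified Euler product; both routes hinge on $\tau(m)=m^{o(1)}$ and yield the same conclusion.
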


\noindent It is clear that Theorem~\ref{HLConjResult} follows from Proposition~\ref{using epsilons prop}, since Conjecture~\ref{HLConj} implies that the hypothesis of Proposition~\ref{using epsilons prop} holds for every $\ep>0$.

\begin{proof}[Proof of Proposition~\ref{using epsilons prop}]
We shall not keep track explicitly of the necessary value for $m(\ep)$, instead simply saying ``when $m$ is large enough'' (in terms of $\ep$) in the appropriate places. We begin by writing
\begin{equation}
\label{split at 1-epsilon}
a(2m) = \sum_{c\mid 2m} R(c) = \sum_{d\mid m} R(2d) = \sum_{\substack{d\mid m \\ d\le m^{1-\ep}}} R(2d) + \sum_ {\substack{d\mid m \\ d>m^{1-\ep}}} R(2d)
\end{equation}
(where the second equality uses the fact that $R(c)=0$ when $c$ is odd).

First we establish the upper bound in equation~\eqref{our theorem with epsilons}. We have $m^{1-\ep} > n(\ep)$ when $m$ is large enough, and so the summands in the second sum on the right-hand side of equation~\eqref{split at 1-epsilon} can be bounded above by the upper bound in equation~\eqref{HL with epsilons}. For the first sum on the right-hand side we simply use the trivial bound $R(2n) \le n$. The result is
\begin{align*}
a(2m) &\le \sum_{\substack{d\mid m \\ d\le m^{1-\ep}}} d + \sum_ {\substack{d\mid m \\ d>m^{1-\ep}}} (1+\ep)2C_2 f(d) \frac{d}{\log^2d} \\
&\le \sum_{\substack{d\mid m \\ d\le m^{1-\ep}}} m^{1-\ep} + (1+\ep)2C_2 \frac1{(1-\ep)^2\log^2m} \sum_ {\substack{d\mid m \\ d>m^{1-\ep}}} df(d) \\
&= m^{1-\ep}\tau(m) + \frac{1+\ep}{(1-\ep)^2} \frac{2C_2}{\log^2m} mJ(m)
\end{align*}
using the identity~\eqref{df(d) sum}, where $\tau(m)$ denotes the number of divisors of~$m$. It is well known that $\tau(m) \ll_\ep m^{\ep/3}$, and so the first term is less than $\ep m/\log^2m$ when $m$ is large enough. Also $(1+\ep)/(1-\ep)^2 \le 1+10\ep$ for $0<\ep\le\frac12$. Therefore
\begin{align*}
a(2m) &\le \ep \frac m{\log^2m} + (1+10\ep) \frac{2C_2}{\log^2m} mJ(m) \le (1+11\ep) 2C_2 J(m) \frac m{\log^2m}
\end{align*}
when $m$ is large enough, since $J(m)\ge1$ for all positive integers $m$ and $2C_2>1$. This establishes the upper bound in equation~\eqref{our theorem with epsilons}.

A similar method addresses the lower bound in equation~\eqref{our theorem with epsilons}. Since $m^{1-\ep} > n(\ep)$ when $m$ is large enough, the summands in the second sum on the right-hand side of equation~\eqref{split at 1-epsilon} can be bounded below by the lower bound in equation~\eqref{HL with epsilons}; the first sum on the right-hand side is nonnegative, and so we can simply delete it. We obtain the lower bound
\begin{align}
a(2m) &\ge \sum_ {\substack{d\mid m \\ d>m^{1-\ep}}} (1+\ep)2C_2 f(d) \frac{d}{\log^2d} \notag \\
&\ge (1-\ep)\frac{2C_2}{\log^2m} \sum_ {\substack{d\mid m \\ d>m^{1-\ep}}} df(d) = (1-\ep)\frac{2C_2}{\log^2m} \bigg( mJ(m) - \sum_ {\substack{d\mid m \\ d\le m^{1-\ep}}} df(d) \bigg),
\label{taking care of difference}
\end{align}
again using the identity~\eqref{df(d) sum}. This last sum is bounded above by
\begin{equation*}
\sum_ {\substack{d\mid m \\ d\le m^{1-\ep}}} df(d) \le \sum_{d\mid m} \bigg( \frac{m^{1-\ep}}d \bigg)^{1+\ep/2} df(d) \le m^{1-\ep/2} \sum_{d\mid m} \prod_{\substack{p|d \\ p > 2}}\frac{p-1}{p^{\ep/2}(p-2)}.
\end{equation*}
There are only finitely many primes $p$ for which $(p-1)/p^{\ep/2}(p-2)$ exceeds 1, and so the inner product on the right-hand side is uniformly bounded by some constant $C(\ep)$. Therefore
\[
\sum_ {\substack{d\mid m \\ d\le m^{1-\ep}}} df(d) \le C(\ep) m^{1-\ep/2} \sum_{d\mid m} 1 = C(\ep) m^{1-\ep/2} \tau(m),
\]
which as above is less than $\ep m$ for $m$ large enough. Therefore equation~\eqref{taking care of difference} becomes
\[
a(m) \ge (1-\ep)\frac{2C_2}{\log^2m}( mJ(m) - \ep m) \ge (1-2\ep) 2C_2J(m) \frac m{\log^2m}
\]
when $m$ is large enough, again since $J(m)\ge1$ always. This establishes the lower bound in equation~\eqref{our theorem with epsilons}.
\end{proof}

\section{Acknowledgment}

The first author thanks the Harish--Chandra Research Institute for its hospitality while this manuscript was being prepared. Both 
authors were supported in part by the Natural Sciences and Engineering Research Council of Canada.

\end{document}